\newtheorem{theorem}{Theorem}[section]
\newtheorem{definition}[theorem]{Definition}
\newtheorem{lemma}[theorem]{Lemma}
\newtheorem{remark}[theorem]{Remark}
\newtheorem{example}[theorem]{Example}
\title{Critical elements in algebras of numerical events}
\author{Dietmar~Dorninger and Helmut~L\"anger}
\date{}
\begin{document}

\footnotetext{Support of the research of the second author by the Austrian Science Fund (FWF), project 10.55776/PIN5424624, is gratefully acknowledged.}

\maketitle

\begin{abstract}
The probability $p(s)$ of the occurrence of an event pertaining to a physical system which is observed in different states $s$ determines a function $p$ from the set $S$ of states of the system to $[0,1]$. The function $p$ is called a numerical event or more precisely an $S$-probability. When appropriately structured, sets $P$ of numerical events form so-called algebras of $S$-probabilities, which are orthomodular posets that can serve as quantum logics. If one deals with a classical physical system, then $P$ will be a Boolean algebra. Starting with a supposed quantum logic or logics obtained by individual measurements, newly added numerical events may turn out to be crucial for the assumption of classicality or non-classicality of the physical system. We will call those numerical events critical and will study their impacts among various classes of algebras of numerical events. Moreover, we will consider the situation of enlarging $S$ by new states and will describe how elements of an algebra of $S$-probabilities contribute to the character of a logic when there exists a certain relation between them and the element that is newly added.
\end{abstract}

{\bf AMS Subject Classification:} 06C15, 06E99, 03G12, 81P10

{\bf Keywords:} Quantum logic, numerical event, probability of an event, state, orthomodular poset, Boolean algebra

\section{Introduction}

In quantum mechanics one can discriminate between classical mechanical phenomena and those that are not classical by means of the underlying quantum logic, which (from an algebraic point of view) will in most cases belong to the class of orthomodular posets. Within this framework classicality can be characterized by the fact that the underlying logic is a Boolean algebra.

Among other possibilities logics can be set up by means of so called numerical events: Let $S$ be a set of states of a physical system and $p(s)$ be the probability of the occurrence of an event (pertaining to a certain observable) when the system is in state $s\in S$, then the function $p\colon S\to[0,1]$ is called a {\em numerical event} or, more precisely, an {\em $S$-probability}.

Agreeing to use the symbols $0$ and $1$ as well for the integers $0$ and $1$ as for the constant functions mapping $S$ to $0$ and $1$, respectively, we give the following 

\begin{definition}\label{def1}
{\rm(\cite{BM91}} and {\rm\cite{BM93})} A set $P$ of $S$-probabilities comprising the constant functions $0$ and $1$ and ordered by the partial order $\le$ of real functions is called an {\em algebra of $S$-probabilities} or {\em algebra of numerical events}, if with $+$ and $-$ for the sum and difference of real functions, respectively, the following holds:   .
\begin{enumerate}
\item[{\rm(a)}] $0\in P$,
\item[{\rm(b)}] $p':=1-p\in P$ for all $p\in P$,
\item[{\rm(c)}] if $p,q,r\in P$ are pairwise orthogonal, i.e.\ $p\le q'$, $q\le r'$ and $r\le p'$, then $p+q+r\in P$.
\end{enumerate}
We denote algebras of $S$-probabilities by $(P,\le,0,1,{}',+)$ and for short only write $P$.
\end{definition}

For the orthogonality relation we will use the notation $\perp$ ($p\perp q$ means $p\leq q'$) and we call three pairwise orthogonal elements an {\em orthogonal triple}. Moreover, we observe that (a) -- (c) imply that if $p\perp q$ for $p,q\in P$ then $p+q\in P$ and $p+q$ is the supremum $p\vee q$ of $p$ and $q$ (see \cite{MT}). Further, we point out that $+$ is only defined for orthogonal elements. If the infimum of two $S$-probabilities $p$ and $q$ exists, we will denote the infimum by $p\wedge q$. Finally, we mention that if $p\le q$ for $p,q\in P$ then $q-p\in P$ and $q-p=q\wedge p'$ (cf.\ e.g.\ \cite{DDL}). In case of $p\le q$ and $p\ne q$ we write $p<q$.

The definition of algebras of $S$-probabilities is motivated by classical event fields, for which the pairwise orthogonality of a triple $A,B,C$ of events implies $A\subseteq B'\cap C'=(B\cup C)'$, which in terms of functions means $p\le1-(q+r)$.

We further mention that any algebra of numerical events is an orthomodular poset (with respect to $\le$ and $'$) which admits a full set of states and any orthomodular poset that admits a full set of states can be represented by an algebra of numerical events (cf.\ \cite{MT}). So, in particular, all Boolean logics and all the frequently used Hilbert logics are among the algebras of $S$-probabilities. (Concerning the question how Hilbert logics feature as algebras of $S$-probabilities see e.g.\ \cite{DLM}.)

Regarding notations one has to be aware that the term ``algebra of $S$-probabilities'' is somewhat misleading, because an algebra of $S$-probabilities $(P,\le,0,1,{}',+)$ is an algebra in the algebraic sense only in respect to the nullary operations $0$, $1$ and the unary operation $'$, but is a partial algebra what $+$ concerns. This is important when we speak of {\em subalgebras of algebras of numerical events}. Then we refer to the same $S$, the same partial order relation $\le$, the full nullary and unary operations and the partial operation $+$. In particular, a subalgebra of an algebra of $S$-probabilities which is a lattice (especially a Boolean algebra) need not be a lattice (Boolean algebra) any more. As usual, embedding an algebra $P$ of $S$-probabilities in an algebra $Q$ of $S$-probabilities means that $P$ is a subalgebra of $Q$ in the aforementioned sense, and embedding an $S$-probability $q\notin P$ together with $P$ into an algebra $Q$ of $S$-probabilities means $P\cup\{q\}\subseteq Q$ with $P$ a subalgebra, a fact that will be implicitly presumed when we speak of embedding $P\cup\{q\}$.

\begin{definition}\label{def2}
Let $P$ be an algebra of $S$-probabilities and $q$ a {\rm(}subsequently observed {\rm)} $S$-probability not belonging to $P$. If $q$ is added to $P$ and $P\cup\{q\}$ cannot be embedded into an algebra of $S$-probabilities, then $q$ will be called {\em destructive}. If $P$ is a Boolean algebra but there does not exist a Boolean algebra of $S$-probabilities into which $P\cup\{q\}$ can be embedded, then $q$ will be called {\em critical}. Conversely, if $P$ is not a Boolean algebra but $P\cup\{q\}$ can be embedded into a Boolean algebra, then $q$ will also be called {\em critical}. The changes to and from a Boolean algebra will be referred to as {\em critical changes}.
\end{definition}

We point out that the use of the word ``critical'' within Definition~\ref{def2} is only symmetric in respect to the assertions ``Boolean algebra, then no Boolean algebra'' and vice versa, but not as far as details are concerned. Starting from a Boolean algebra, $q$ is considered as critical if there does not exist a Boolean algebra into which $P\cup\{q\}$ can be be embedded irrespective of the fact that there might exist an other kind of algebra of $S$-probabilities into which $P\cup\{q\}$ could be embedded. If we start with a non-Boolean algebra of $S$-probabilities and call $q$ critical, this means that there always does exist a Boolean algebra of $S$-probabilities into which $P\cup\{q\}$ can be embedded. Obviously, in this case already $P$ can be embedded into a Boolean algebra of numerical events and $q$ is then to be considered as a cause for a reassessment reinforcing the information that one will probably deal with a classical process.

We further mention that within an algebras $P$ of $S$-probabilities any $p\in P\setminus\{0,1\}$ is {\em varying}, which means that it is neither $\le1/2$ nor $\ge1/2$ (cf.\ \cite{DDL}). ($p\le1/2$ would imply $p\perp p\perp p'\perp p$ and hence $p+p+p'\le1$, i.e.\ $p\le0$ which yields $p=0$. Dually, $p\ge1/2$ would imply $p'\perp p'\perp p\perp p'$ and hence $p'+p'+p\le1$, i.e. $p'\le0$ which yields $p'=0$, i.e.\ $p=1$.) An element different from $0$ and $1$ that is varying is also known as a {\em proper} element (cf.\ \cite{DL18} and \cite{DL21}). Moreover, the constant functions $0$ and $1$ are also considered as proper. So when adding a non-proper $S$-probability $q$ to an algebra of $S$-probabilities $P$, $q$ is destructive. In the following we will desist from this apparent situation and always assume that $S$-probabilities different from $0$ and $1$ that are added are varying.

\section{Illustrative examples}

\begin{example}
If one adds some $q$ to an algebra of $S$-probabilities $P$ and there exist $p_1,p_2\in P$ with $p_1\perp p_2$ and $q\perp p_1,p_2$ such that $p_1+p_2+q>1$ then $q$ is destructive. E.g.\ take $p_1=(3/4,1/4,1/4), p_2=(1/4,3/4,1/4), q=(1/4,1/4,3/4)$.
\end{example}

\begin{example}\label{ex1}
For $|S|= 4$ we consider the orthomodular lattice $P$ of length $2$ with the four atoms $p_1=(1,0,0,1)$, $p_2=(0,1,0,1)$, $p_1'=(0,1,1,0)$ and $p_2'=(1,0,1,0)$ which is isomorphic as a lattice to the lattice commonly known as $\mathbf{MO_2}$. $P$ is not a Boolean algebra, but $P$ together with some $q\notin P$ mapping $S$ to $\{0,1\}$ can be embedded in a Boolean algebra, namely the Boolean algebra consisting of all sixteen quadruples of $0$ and $1$. Therefore, adding $q$ does entail a critical change.
\end{example}

\begin{example}\label{ex2}
Given the four-element Boolean algebra $P:=\{0,p_1,p_1',1\}$ with $p_1,p_1'$ from Example~\ref{ex1} and $q=(1/4,0,0,1)$. With reference to $q\le p_1$ we consider the element $q+p_1'=(1/4,1,1,1)=:r$ and due to $r'\le p_1$ we set up the element $r'+p_1'$ which turns out to coincide with $q'$. This way we obtain that $0,p_1,p_1',q,q',r,r',1$ constitute an eight-element Boolean algebra which contains $P$ as a subalgebra, showing that $q$ is not critical. 
\end{example}

\begin{example}
Assume $p=(1,0,1/2,1/2)$ and $P$ to be the Boolean algebra consisting of $0,p,p',1$. Further, let $q=(1/2,1/2,0,1)$. Then there does exist an an algebra of $S$-prob\-a\-bil\-i\-ties into which $P\cup\{q\}$ can be embedded, namely the orthomodular lattice $\mathbf Q$ with the four pairwise not comparable elements $p,p',q,q'$, but not a Boolean algebra. Within a Boolean algebra of $S$-probabilities there would have to exist the infima of any two of these four elements which then turn out all to be $\le1/2$ as well as their pairwise suprema which are all $\ge1/2$. This means these elements would not be proper unless they were $0$ and $1$, respectively. Assuming that they are $0$ and $1$ there would have to exist a Boolean algebra containing a subalgebra {\rm(}in the algebraic sense{\rm)} isomorphic to $Q$, a contradiction. So $q$ is critical, but not destructive.
\end{example}

\begin{remark}
Let $P$ be an algebra of $S$-probabilities, $p\in P$ and $q\in[p-1/2,p)\cup(p,p+1/2]$. Then $q$ is destructive.
\end{remark}

\begin{proof}
Assume that $P\cup\{q\}$ can be embedded into an algebra of $S$-probabilities. \\
If $q\in[p-1/2,p)$ then $p'+q\in[1/2,1)$, a contradiction. \\
If $q\in(p,p+1/2]$ then $p+q'\in[1/2,1)$, a contradiction.
\end{proof}

\begin{example}
If $P=\{(0,0),(1/8,5/8),(7/8,3/8),(1,1)\}$ and $q=(3/8,7/8)$ then $q\in(p,p+1/2]$ for $p=(1/8,5/8)$ and hence $q$ is destructive.	
\end{example}

As for the question where the knowledge  comes from that a logic is appropriate for a certain situation, quite often the underlying logic is supposed due to previous experiments and has then to be verified by further experiments. It can also happen that there is no prior information and only a few numerical events  are known or gained by experiments, in particular if $S$ is finite or when the $S$-probabilities have only the values $0$ and $1$ (cf.\ \cite{D12}, \cite{DL18} and \cite{DLM}). Algebras of $S$-probabilities which have only the values $0$ and $1$ are so-called {\em concrete logics}, that are logics which have a set representation (cf.\ \cite P). In case of ascertained two-valued $S$-probabilities one can easily construct an appropriate algebra of numerical events by setting up the partial algebra generated by the given $S$-probabilities in respect to the operations $0$, $1$ and $'$ and the partial operation $+$ (cf.\ \cite{D12}). -- The only questionable axiom (c) of Definition \ref{def1} is apparently fulfilled because for any orthogonal triple $\{p,q,r\}$ at most one of the elements can be $1$. As for algebras of $S$-probabilities with arbitrary ranges we point out that there are several papers which deal with the question of characterizing them by specific internal connections of their elements (cf.\ \cite{DDL}, \cite{DL13} and \cite{DL14}). -- For a general overview on identifying quantum logics by numerical events we refer to the paper \cite{D20}.

In this paper we will focus on the fact how adding new $S$-probabilities to known algebras of $S$-probabilities might change the character of an algebra of $S$-probabilities in respect to being a Boolean algebra or not, which will make a total difference between the insight that physical matters take place along classical lines or follow laws of quantum mechanics. In particular we will study the situation when new states are added and how certain properties of elements in relation to a newly added numerical event might change the character of an algebra of $S$-probabilities. Moreover, we will examine special classes of $S$-probabilities for this end.

\section{Extending the set of states}

It sometimes happens that a certain algebra of $S$-probabilities $P=(P,\leq,{}',0,1,+)$ is known to underlie an experiment and a new numerical event $q$, the values of which are not only relevant for all $s\in S$ but also for some more states, pertaining to a further observable gains significance. To start with we assume that there is just one such additional state $\overline s$ and denote the set $S\cup\{\overline s\}$ by $\overline S$. Further, we suppose that for all $p\in P$ the probability values $p(\overline s)$ are $0$, i.e.\ we assume that the state $\overline s$ is not relevant for all $p\in P$. To cope with this new situation we will set up an algebra of $\overline S$-probabilities which has the same characteristic features as $P$. For this end we now define two kinds of new $\overline S$-probabilities which we will denote by $(p,0)$ and $(p,1)$, respectively, with the properties $(p,0)(s)=(p,1)(s)=p(s)$ for $s\in S$ and $(p,0)(\overline s)=0$ and $(p,1)(\overline s)=1$. The set of all these $\overline S$-probabilities will be denoted by $\overline P$. Next we extend the order $\le$ to $\overline P$. For $(p_1,v),(p_2,w)\in\overline P$ with $p_1,p_2\in P$ and $v,w\in\{0,1\}$ we define $(p_1,v)\le(p_2,w)$ if and only if $p_1\le p_2$ and $v\le w$. Further, we define $(p,v)^*=(p',v')$ with $v'=0$ if $v=1$ and $v'=1$ for $v=0$ and agree on the nullary operations $\overline0=(0,0)$ and $\overline1=(1,1)$. As for the partial operation $+$ in $\overline P$, $+$ should have the same meaning as in $P$. The transition from $(P,\le,0,1,{}',+)$ to $(\overline P,\le,\overline0,\overline1,^*,+)$ will be called a {\em $0,1$-extension} of $P$.

\begin{theorem}\label{th1}
$P$ is an algebra of $S$-probabilities if and only if $\overline P$ is an algebra of $\overline S$-probabilities. If $P$ is an algebra of $S$-probabilities then $P$ is a concrete logic, an orthomodular lattice or a Boolean algebra if and only if $\overline P$ has the same structure.
\end{theorem}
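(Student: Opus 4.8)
The plan is to exploit the evident structural correspondence between $\overline P$ and the direct product $P\times\mathbf 2$, where $\mathbf 2=\{0,1\}$ denotes the two-element Boolean algebra regarded as the algebra of $\{\overline s\}$-probabilities. By construction the order on $\overline P$ is componentwise, $(p_1,v)\le(p_2,w)$ iff $p_1\le p_2$ and $v\le w$; the orthocomplementation $(p,v)^*=(p',v')$ is componentwise; and $\overline0=(0,0)$, $\overline1=(1,1)$ are the bottom and top of the product. Since $+$ in $\overline P$ is the sum of real functions it too is computed coordinatewise, $(p,u)+(q,v)=(p+q,u+v)$, and a direct check shows $(p,u)\perp(q,v)$ iff $p\perp q$ in $P$ and $u\perp v$ in $\mathbf 2$ (the latter meaning not both equal $1$). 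Thus $\overline P$ is, as a partial algebra with orthocomplementation and bounds, isomorphic to $P\times\mathbf 2$, and all subsequent assertions are read off from this identification.

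For the first statement I would verify axioms (a)--(c) directly in both directions. If $P$ is an algebra, then $\overline0=(0,0)\in\overline P$ gives (a); $(p,v)^*=(p',v')\in\overline P$ because $p'\in P$ gives (b); and for a pairwise orthogonal triple in $\overline P$ the second coordinates are pairwise orthogonal in $\mathbf 2$, so at most one of them is $1$ and their sum lies in $\{0,1\}$, while the first coordinates form an orthogonal triple in $P$ whose sum lies in $P$ by (c); hence the triple's sum lies in $\overline P$. Conversely, if $\overline P$ is an algebra, then $(0,0)\in\overline P$ forces $0\in P$; applying $^*$ to $(p,0)$ yields $(p',1)\in\overline P$, whence $p'\in P$; and lifting an orthogonal triple $p,q,r$ of $P$ to the triple $(p,0),(q,0),(r,0)$, which is pairwise orthogonal in $\overline P$, gives $(p+q+r,0)\in\overline P$, so $p+q+r\in P$. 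This settles the equivalence.

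For the structural assertions I would argue as follows. Since each element of $\overline P$ is two-valued on $\overline s$ by construction, $(p,v)$ takes only the values $0,1$ precisely when $p$ does; hence $\overline P$ is two-valued iff $P$ is, which together with the first statement handles the concrete-logic case. For the lattice case I would use that meets and joins in a product poset are formed coordinatewise: for $(p_1,v_1),(p_2,v_2)\in\overline P$ an infimum exists iff $p_1\wedge p_2$ exists in $P$ (the $\mathbf 2$-component always exists, $\mathbf 2$ being a chain), and then equals $(p_1\wedge p_2,v_1\wedge v_2)$, and dually for suprema; since both lifts $(p,0),(p,1)$ of every $p$ belong to $\overline P$, it follows that $\overline P$ is a lattice iff $P$ is. As both are orthomodular posets, being an orthomodular lattice transfers in either direction. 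Finally, a distributive orthomodular lattice is exactly a Boolean algebra, and distributivity passes through a direct product with the distributive lattice $\mathbf 2$ in both directions (in the backward direction via the sublattice $\{(p,0):p\in P\}\cong P$); so $\overline P$ is Boolean iff $P$ is.

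The step I expect to demand the most care is the lattice/Boolean transfer: one must check that the coordinatewise formula really yields the infimum and supremum in $\overline P$ for the defined order, and---because the paper stresses that a subalgebra which is a lattice (or Boolean algebra) need not remain one---that no meets or joins are lost or gained in passing between $P$ and $\overline P$. The presence of both $(p,0)$ and $(p,1)$ for every $p$, together with the fact that the chain $\mathbf 2$ contributes no obstruction, is exactly what makes this transfer clean.
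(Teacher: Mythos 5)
Your proof is correct and follows essentially the same route as the paper: both rest on identifying $\overline P$ with the product of $P$ and the two-element algebra $\mathbf P_2$ of $\{\overline s\}$-probabilities, the paper citing Theorem~2.8 of \cite{DDL} for closure of such products and a brief variety argument for the orthomodular-lattice and Boolean cases, where you instead verify axioms (a)--(c) and the coordinatewise lattice operations by hand. Your write-up is somewhat more complete, since you also make the converse directions explicit (recovering the axioms for $P$ from $\overline P$ via the lifts $(p,0)$, and distributivity of $P$ via the sublattice $\{(p,0):p\in P\}$), which the paper leaves implicit.
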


\begin{proof}
The poset $\mathbf P_2=(\{0,1\},\le)$ can be conceived as an algebra of $\{\overline s\}$-probabilities. The only requirement of Definition~\ref{def1} that has to be given some thought is axiom (c). But this is valid, because the only non-trivial orthogonal triple of $\mathbf P_2$ is $\{0,0,1\}$. This way $\overline P$ can be considered as the product of the poset $(P,\le)$ and of $\mathbf P_2$ which by Theorem~2.8 of \cite{DDL} is an algebra of $\overline S$-probabilities. That a $0,1$-extension of a concrete logic leads to a concrete logic is obvious. If $P$ was an orthomodular lattice or a Boolean algebra, also $\overline P$ is an orthomodular lattice or a Boolean algebra, because a $0,1$-extension means that $P$ is multiplied by an algebra within the same variety. 
\end{proof} 

Sticking to the above notation we now discuss adding a new element $\overline q=(q,c)\notin\overline P$ to $\overline P$ with $(q,c)(s)=q(s)$ for $s\in S$ and $(q,c)(\overline s) = c$, $c\in\mathbb R$, $0<c<1$.

\begin{theorem}\label{th2}
Let $\overline P$ be a Boolean algebra and $\overline q=(q,c)\notin\overline P$ an $\overline S$-probability to be added to $\overline P$ with $q$ not comparable to any element of $P\setminus\{0,1\}$. If there exists some $p\in P$ such that either $\min\{p(s),q(s)\}\le1/2$ for each $s\in S$ or $\max\{p(s),q(s)\}\ge1/2$ for each $s\in S$ then there does exist an algebra of numerical events into which $\overline P\cup\{\overline q\}$ can be embedded, but $\overline q$ is critical.
\end{theorem}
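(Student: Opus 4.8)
The plan is to establish the two halves of the assertion separately: first that $\overline q$ is not destructive, by exhibiting a concrete algebra of $\overline S$-probabilities containing $\overline P\cup\{\overline q\}$, and then that no \emph{Boolean} algebra can contain it, which (since $\overline P$ is a Boolean algebra) is exactly criticality.

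For non-destructiveness I would take as candidate the set $\overline P\cup\{\overline q,\overline q^{*}\}$, where $\overline q^{*}=(q',c')$ with $c'=1-c$; note that $\overline q\neq\overline q^{*}$ and $\overline q^{*}\notin\overline P$, since $q$ is varying and $\overline P$ is closed under $^{*}$. The key observation is that $\overline q$ is orthogonal to \emph{only} $\overline 0$ among the elements of $\overline P$: if $\overline q\perp(p,v)$, then evaluating at $\overline s$ gives $c+v\le1$, which forces $v=0$ because $0<c<1$, while evaluating on $S$ gives $q\le p'$, i.e.\ $q\perp p$; but $q\not\le p'$ for every proper $p$ (this is one half of the incomparability of $q$ and the proper element $p'$), and $q\perp1$ fails as well, so $p=0$. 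The same holds for $\overline q^{*}$. Consequently every orthogonal triple meeting $\{\overline q,\overline q^{*}\}$ has its remaining members in $\{\overline 0,\overline q^{*}\}$ (respectively $\{\overline 0,\overline q\}$), so that its sum is one of $\overline q,\overline q^{*},\overline 1$, all lying in the set; triples inside $\overline P$ are handled by $\overline P$ itself. As the set is closed under $^{*}$ and contains $\overline 0,\overline 1$, axiom (c) holds and we obtain an algebra of $\overline S$-probabilities into which $\overline P\cup\{\overline q\}$ embeds. Notice that this step uses only the incomparability hypothesis and $0<c<1$.

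For criticality I would argue by contradiction: suppose $\overline P\cup\{\overline q\}$ embeds into a Boolean algebra $B$ of $\overline S$-probabilities, and let $p$ be the proper element furnished by the hypothesis (a witness $p\in\{0,1\}$ carries no information and is discarded). The engine is the elementary fact that in $B$, whenever $a\wedge b\le1/2$ pointwise one has $a\wedge b=\overline 0$ — because every element of an algebra of numerical events that is $\le1/2$ must be $\overline 0$ — and hence $a\le b^{*}$, i.e.\ $a\perp b$. If (A) holds I set $a=(p,0)$ and $b=\overline q$; then $a\wedge b\le(\min\{p,q\},0)\le1/2$, so $(p,0)\perp\overline q$, which on $S$ reads $p\le q'$, i.e.\ $q\le p'$ — contradicting the incomparability of $q$ and $p'$. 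If instead (B) holds I dualize: with $a=(p,1)$ and $b=\overline q$ one has $a\vee b\ge(\max\{p,q\},1)\ge1/2$, whence $a\vee b=\overline 1$, so $a^{*}\wedge b^{*}=\overline 0$ and $a^{*}=(p',0)\le\overline q$, which on $S$ gives $p'\le q$ — again contradicting incomparability of $q$ and $p'$. Either way no such $B$ exists, so $\overline q$ is critical.

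The main obstacle is the criticality half, and within it the correct exploitation of the two features that distinguish this setting: the ``varying'' property (used to collapse a small meet to $\overline 0$ and a large join to $\overline 1$) and the freedom, granted by the extension, to choose the second coordinate of the companion element — $0$ under (A) and $1$ under (B) — so that the estimate at the new state $\overline s$ is automatic. Translating the resulting equality $a\wedge b=\overline 0$ (or $a\vee b=\overline1$) into an \emph{order} relation via Boolean complementation, and recognizing that this order relation is precisely what incomparability of $q$ and $p'$ forbids, is the crux; the remaining verifications (closure of $\overline P\cup\{\overline q,\overline q^{*}\}$ and the degenerate cases) are routine.
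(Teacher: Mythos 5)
Your proposal is correct and follows essentially the same route as the paper: it exhibits $\overline P\cup\{\overline q,\overline q^{*}\}$ as an algebra of $\overline S$-probabilities, then, in a putative Boolean algebra, collapses the infimum of $(p,0)$ and $(q,c)$ to $\overline 0$ via the varying/proper argument and converts this by Boolean complementation into the forbidden comparability $q\le p'$ (dually for the $\max$ case). The only difference is one of detail: you verify explicitly the closure of $\overline P\cup\{\overline q,\overline q^{*}\}$ under axiom (c), which the paper merely asserts, and you make explicit the implicit assumption (shared by the paper's proof) that the witness $p$ is proper.
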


\begin{proof}
Let $p\in P$ be an $S$-probability for which $\min\{p(s),q(s)\}\le1/2$ for each $s\in S$. Then $\overline P\cup\{\overline q,\overline q^*\}$ constitutes an algebra of $\overline S$-probabilities. If there would exist a Boolean algebra $\mathbf Q$ in which $\overline P\cup\{\overline q,\overline q^*\}$ could be embedded, then any lower bound $b$ of $(p,0)$ and $(q,c)$ within $\mathbf Q$ would be $\le(1/2,0)$ and hence, due to the fact that $b$ must be proper, $b$, and in particular the infimum of $(p,0)$ and $(q,c)$ in $\mathbf Q$, would have to be $0$. But, $\mathbf Q$ being a Boolean algebra, $(p,0)\wedge(q,c)=0$ implies $(p,0)\le(q',c')$ from which we infer $p\le q'$, a contradiction to $q$ not being comparable to any element of $P$. If $\max\{p(s),q(s)\}\ge1/2$ dual arguments also yield a contradiction.
\end{proof}.

Adding an element $\overline q\notin\overline P$ to a $0,1$-extension $\overline P$ of an algebra of $S$-probabilities $P$ the values of which on $S$ are not relevant might be expressed by assuming that $\overline q=(0,c)$. However, this may be problematic.

\begin{theorem}\label{th3}
Let $P$ be an algebra of $S$-probabilities, $\overline P$ the $0,1$-extension of $P$ subject to the addition of a state $\overline s$ to $S$ and $\overline q\notin\overline P$. If the values of $\overline q$ on $S$ coincide with the values of some $p\in P$ and $\overline q(\overline s)\notin\{0,1\}$ then $\overline q$ is destructive. If $P$ is a concrete logic that is not a Boolean algebra and $\overline q(\overline s)\in\{0,1\}$ then $\overline q$ is critical.
\end{theorem}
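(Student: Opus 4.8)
The plan is to prove the two assertions by opposite means: the first by showing that a strictly fractional value at the new state cannot be accommodated, the second by exhibiting an explicit Boolean overalgebra. For the destructiveness claim I would argue by contradiction. Suppose $\overline P\cup\{\overline q\}$ embeds into an algebra $Q$ of $\overline S$-probabilities, where $\overline q=(p,c)$ with $p\in P$ and $0<c<1$. Since $(p,0)$ and $(p,1)$ both lie in $\overline P\subseteq Q$ and $(p,0)\le\overline q\le(p,1)$ pointwise, both differences $\overline q-(p,0)$ and $(p,1)-\overline q$ belong to $Q$ (recall that $r\le t$ entails $t-r\in Q$). As real functions on $\overline S$ these differences vanish on $S$ and take the values $c$ and $1-c$ at $\overline s$. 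Since $0<c<1$, at least one of $c,1-c$ is $\le 1/2$, and the corresponding difference is an element of $Q$ that is $\le 1/2$ everywhere yet equals neither $\overline 0$ nor $\overline 1$ (it is positive at $\overline s$ but $0$ on $S$); this contradicts the fact recalled in the introduction that every element of an algebra of numerical events other than $0$ and $1$ is varying. I expect this part to be routine, the only care being to confirm that the two differences are genuine elements and to select the correct one of $c,1-c$.

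For the criticality claim the idea is to land everything inside a power set. By Theorem~\ref{th1} the $0,1$-extension $\overline P$ is again a concrete logic and is non-Boolean precisely because $P$ is; hence, by Definition~\ref{def2}, it suffices to embed $\overline P\cup\{\overline q\}$ into some Boolean algebra of $\overline S$-probabilities. Here $\overline q$ is a two-valued numerical event (as is typical when concrete logics are built from ascertained two-valued observations), so it takes only the values $0$ and $1$ on all of $\overline S$, the hypothesis $\overline q(\overline s)\in\{0,1\}$ being the instance of this at the new state. Now the family $\mathbf{2}^{\overline S}$ of all $\{0,1\}$-valued functions on $\overline S$ is a Boolean algebra of $\overline S$-probabilities, and every concrete logic on $\overline S$ sits inside it as a subalgebra in the sense of the introduction, since order, complementation and the partial sum $+$ are all inherited. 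Consequently $\overline P\cup\{\overline q\}\subseteq\mathbf{2}^{\overline S}$ with $\overline P$ a subalgebra, which is the desired Boolean embedding and shows $\overline q$ to be critical.

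The main obstacle is conceptual rather than computational and sits in the second part: one must be at ease with the paper's non-standard notion of subalgebra, under which $\overline P$ embeds into the Boolean algebra $\mathbf{2}^{\overline S}$ without itself becoming Boolean. Once this is accepted the embedding is immediate and needs no case distinction on the value $\overline q(\overline s)$, because $\mathbf{2}^{\overline S}$ contains every two-valued event irrespective of that value. A secondary point worth checking is that $\overline q\notin\overline P$ forces $\overline q\neq\overline 0,\overline 1$, so that $\overline q$ is automatically varying and the embedding introduces no improper element.
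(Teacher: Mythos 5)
Your proposal is correct and takes essentially the same route as the paper: for destructiveness the paper forms the orthogonal sums $(p,0)+(p',c')=(1,c')$ and $(p,c)+(p',0)=(1,c)$, which are exactly the complements of your differences $(0,1-c)$ and $(0,c)$, so both arguments produce the same non-varying, non-trivial element and contradict properness. The criticality part is identical to the paper's (Theorem~\ref{th1} plus the embedding of $\overline P\cup\{\overline q\}$ into $2^{\overline S}$), and you even make explicit the same implicit assumption the paper relies on, namely that $\overline q$ is two-valued on all of $\overline S$, not merely at $\overline s$.
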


\begin{proof}
Assume $\overline q=(p,c)$ with some $p\in P$. Then $(p,0)<(p,c)<(p,1)$ in $\overline P$ from which we can infer that $(p,0)\perp(p',c')$ and $(p,c)\perp(p',0)$. Adding these pairs of orthogonal elements we obtain that both $(1,c')$ and $(1,c)$ belong to $\overline P$. If $c\le1/2$ then $(1,c')$ is not varying, if $c\ge1/2$ then $(1,c)$ is not varying, hence $\overline q$ is destructive. Next we suppose that $P$ is a concrete logic that is not a Boolean algebra and $\overline q(\overline s)\in\{0,1\}$. Then, according to Theorem~\ref{th1}, also $\overline P$ is not a Boolean algebra. Since $\overline P\cup\{\overline q\}$ is a subposet of the Boolean algebra $2^{\overline S}$, $\overline q$ is critical.
\end{proof} 

\section{Special elements}

\begin{definition}\label{def3}
Let $p$ and $q$ be proper $S$-probabilities. $p$ and $q$ are called {\em partially reciprocal}, if either $\min\{p(s),q(s)\}\le1/2$ for all $s\in S$, or else $\max\{p(s),q(s)\}\ge1/2$ for all $s\in S$. In the first case we will call $p$ and $q$ {\em partially reciprocal below $1/2$} and in the second case {\em partially reciprocal above $1/2$}. If $p$ and $q$ are both, partially reciprocal below and above $1/2$, then we will call them {\em reciprocal}.
\end{definition}

The wording ``reciprocal'' is derived from the property that if $p(s)<1/2$ for an $s\in S$ then $q(s)\geq1/2$ and if $p(s)>1/2$ for an $s\in S$ then $q(s)\leq 1/2$, and the same being true with reversed roles of $p$ and $q$. Due to the fact that $p$ and $q$ are interchangeable we will also say that $p$ and $q$ are reciprocal to each other. 

\begin{lemma}\label{lm1}
If the elements $p$ and $q$ of an be algebra of $S$-probabilities $P$ are partially reciprocal below $1/2$ and $u\le p,q$ for some $u\in P$ then $u=0$. If $p$ and $q$ are partially reciprocal above $1/2$ and $v\ge p,q$ for some $v\in P$ then $v=1$.
\end{lemma}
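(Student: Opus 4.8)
The plan is to reduce both assertions to the elementary fact, recorded in the introduction, that an element of an algebra of $S$-probabilities which is pointwise $\le 1/2$ must equal $0$, and dually that one which is pointwise $\ge 1/2$ must equal $1$. Since the order on $P$ is the pointwise order of real functions and partial reciprocity is itself a pointwise condition, the whole argument can be carried out by working directly with the values $p(s)$, $q(s)$, $u(s)$ and $v(s)$.

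For the first statement, I would start from $u \le p$ and $u \le q$, which mean $u(s) \le p(s)$ and $u(s) \le q(s)$ for every $s \in S$, so that $u(s) \le \min\{p(s),q(s)\}$ for each $s$. Partial reciprocity below $1/2$ gives $\min\{p(s),q(s)\} \le 1/2$ for all $s$, whence $u(s) \le 1/2$ for all $s$, i.e.\ $u \le 1/2$ as a function. The quoted argument then applies: the condition $u \le 1/2$ is exactly $u \le u'$, so $\{u,u,u'\}$ is an orthogonal triple, forcing $u + u + u' = 1 + u \le 1$ and hence $u = 0$.

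The second statement is strictly dual. From $v \ge p$ and $v \ge q$ I would deduce $v(s) \ge \max\{p(s),q(s)\}$ for each $s$, and partial reciprocity above $1/2$ gives $\max\{p(s),q(s)\} \ge 1/2$ for all $s$, so $v \ge 1/2$. Now $v \ge 1/2$ is precisely $v' \le v$, so $\{v',v',v\}$ is an orthogonal triple, yielding $v' + v' + v = 1 + v' \le 1$ and hence $v' = 0$, i.e.\ $v = 1$.

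I do not expect a genuine obstacle here. The only content is the observation that the two hypotheses translate verbatim into the pointwise bounds $u \le 1/2$ and $v \ge 1/2$, after which the conclusion is immediate from the non-existence of proper elements lying below $1/2$ or above $1/2$. The single point meriting a moment's care is verifying that the triples $\{u,u,u'\}$ and $\{v',v',v\}$ really are orthogonal, but this is exactly the short computation already sketched in the introduction, so it can be invoked rather than redone.
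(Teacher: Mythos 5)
Your proof is correct and follows essentially the same route as the paper: both derive the pointwise bound $u\le 1/2$ (resp.\ $v\ge 1/2$) from the partial-reciprocity hypothesis and then invoke the fact that no proper element can lie below (resp.\ above) $1/2$. The paper simply cites this fact as "not proper unless $0$ (resp.\ $1$)," whereas you re-verify it via the orthogonal triples $\{u,u,u'\}$ and $\{v',v',v\}$, which is the same computation recorded in the introduction.
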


\begin{proof}
If $p,q$ are partially reciprocal below $1/2$ and $u(s)\le p(s),q(s)$ for all $s\in S$, then $u(s)\le\min\{p(s),q(s)\}\le1/2$. This means that $u$ is not proper unless it is $0$. Dually we obtain that $v\ge p,q$ for elements $p,q$ which are partially reciprocal above $1/2$ implies $v=1$.
\end{proof}

\begin{theorem}\label{th4}
Let $P$ be a Boolean algebra of $S$-probabilities and $q$ a varying $S$-probability not belonging to $P$. Then $q$ is critical if there exists some $p\in P$ such that one of the following conditions hold:
\begin{enumerate}
\item[{\rm(i)}] $p$ and $q$ are partially reciprocal below $1/2$ and there exists some $s\in S$ with $p(s)+q(s)>1$,
\item[{\rm(ii)}] $p$ and $q$ are partially reciprocal above $1/2$ and there exists some $s\in S$ with $p(s)+q(s)<1$,
\item[{\rm(iii)}] $p$ and $q$ are reciprocal.
\end{enumerate} 
\end{theorem}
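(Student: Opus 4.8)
The plan is to argue by contradiction in each of the three cases, exploiting two facts about a Boolean algebra $\mathbf B$ of $S$-probabilities: since $\mathbf B$ is ordered by the pointwise order of real functions, infima and suprema formed in $\mathbf B$ are pointwise-bounded by the pointwise $\min$ resp.\ $\max$; and the Boolean identities $a\wedge b=0\iff a\le b'$ and $a\vee b=1\iff a'\le b$ let one convert lattice relations into pointwise inequalities. So I would assume that $P\cup\{q\}$ can be embedded into some Boolean algebra $\mathbf B$ of $S$-probabilities, fix the element $p\in P$ supplied by the hypothesis, and form $p\wedge q$ and $p\vee q$ inside $\mathbf B$, noting that $(p\wedge q)(s)\le\min\{p(s),q(s)\}$ and $(p\vee q)(s)\ge\max\{p(s),q(s)\}$ for every $s\in S$.

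For (i), partial reciprocity below $1/2$ forces $(p\wedge q)(s)\le1/2$ for all $s\in S$; as every element of $\mathbf B$ other than $0$ is varying, this means $p\wedge q=0$, which is exactly the reasoning of Lemma~\ref{lm1} now applied inside $\mathbf B$. Because $\mathbf B$ is Boolean, $p\wedge q=0$ yields $p\le q'$, i.e.\ $p(s)+q(s)\le1$ for every $s\in S$, contradicting the existence of an $s$ with $p(s)+q(s)>1$. Case (ii) is the order dual: partial reciprocity above $1/2$ forces $(p\vee q)(s)\ge1/2$ for all $s$, hence $p\vee q=1$, whence $p'\le q$ and so $p(s)+q(s)\ge1$ for every $s\in S$, contradicting the special $s$ of (ii).

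For (iii) I would run both halves at once: reciprocity gives both $p\wedge q=0$ and $p\vee q=1$ in $\mathbf B$, hence $p\le q'$ and $p'\le q$, so $p(s)+q(s)\le1$ and $p(s)+q(s)\ge1$ for every $s\in S$. This forces $q=p'$ pointwise; but $P$ is closed under ${}'$, so $q=p'\in P$, contradicting $q\notin P$. In each case the contradiction shows no Boolean algebra of $S$-probabilities can contain $P\cup\{q\}$, which is precisely what ``critical'' means when $P$ is Boolean.

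The main point requiring care is the transition from $P$ to the ambient Boolean algebra $\mathbf B$: the infimum and supremum must be taken in $\mathbf B$, where they are guaranteed to exist, rather than in $P$, and one must justify that they remain pointwise-bounded by $\min$ resp.\ $\max$ because the order on $\mathbf B$ is still the pointwise order of real functions. Once this is secured, the varying property (yielding $p\wedge q=0$ or $p\vee q=1$) together with the Boolean complement identities reduces all three cases to short pointwise inequality arguments, closely mirroring the proof of Theorem~\ref{th2}.
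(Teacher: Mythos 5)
Your proof is correct and takes essentially the same approach as the paper's: assume a Boolean algebra $\mathbf Q$ of $S$-probabilities containing $P\cup\{q\}$, use the varying property (the content of Lemma~\ref{lm1}) to force $p\wedge q=0$ resp.\ $p\vee q=1$ in $\mathbf Q$, and translate these via the Boolean identities into pointwise inequalities contradicting the hypotheses. The only cosmetic difference is in case (iii), where you derive $q=p'$ pointwise and contradict $q\notin P$, while the paper phrases the same contradiction as $q$ and $p'$ being two distinct complements of $p$ in a Boolean algebra.
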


\begin{proof}
We assume that there exists a Boolean algebra $\mathbf Q$ of $S$-probabilities comprising $P\cup\{q\}$.
\begin{enumerate}
\item[(i)] If $p$ and $q$ are partially reciprocal below $1/2$ and there exists some $s\in S$ with $p(s)+q(s)>1$ then according to Lemma~\ref{lm1} $u\le p,q $ in $\mathbf Q$ entails $u=0$, i.e.\ $p\wedge q=0$ from which we obtain due to $\mathbf Q$ being a Boolean algebra that $p\le q'$ and hence $p+q\leq 1$, a contradiction to $p(s)+q(s)>1$ for some $s\in S$.
\item[(ii)] By arguments dual to the above ones we get $p'\wedge q'=0$ which yields $p'\leq q$, a contradiction to the existence of an $s\in S$ with $p'(s)>q(s)$ which follows from the assumption $p(s)+q(s)<1$.
\item[(iii)] By means of Lemma~\ref{lm1} we obtain that $p\wedge q=0$ and $p\vee q=1$ within $\mathbf Q$. Since $q\notin P$ this implies that $q$ and $p'$ are two different complements of $p$ which contradicts the uniqueness of the complement within a Boolean algebra.
\end{enumerate}
\end{proof}

\begin{theorem}\label{th5}
Let $P$ be an algebra of $S$-probabilities that is {\em not} a Boolean algebra and $q\notin P$. Then $q$ is {\em not} critical if
\begin{enumerate}
\item[{\rm(i)}] there exists some $p\in P$ such that $p,q,p',q'$ are pairwise different and pairwise reciprocal, and $q$ is arbitrarily chosen,
\item[{\rm(ii)}] there exist $p_1,p_2\in P$ such that $p_1,p_2,p_1',p_2'$ are pairwise different and pairwise reciprocal, and $q$ arbitrarily chosen,
\item[{\rm(iii)}] there exist $p_1,p_2,p_3,p_4\in P$ pairwise partially reciprocal below $1/2$ such that $p_1\perp p_2$, $p_3\perp p_4$ and $p_1+p_2=p_3+p_4$, and $q$ arbitrarily chosen,
\item[{\rm(iv)}] there exist $p_1,p_2,p_3\in P$ such that together with $p_4:=q$ case {\rm(iii)} applies.
\end{enumerate}
\end{theorem}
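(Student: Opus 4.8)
The plan is to unwind Definition~\ref{def2}: since $P$ is \emph{not} a Boolean algebra, $q$ is critical precisely when $P\cup\{q\}$ can be embedded into a Boolean algebra of $S$-probabilities. So to show $q$ is \emph{not} critical I would assume for contradiction that a Boolean algebra $\mathbf Q$ of $S$-probabilities contains $P\cup\{q\}$ (in cases (ii) and (iii) it even suffices to have $\mathbf Q\supseteq P$, since the obstruction will already sit inside $P$, which is why $q$ may there be chosen arbitrarily) and derive a contradiction. The recurring tool is Lemma~\ref{lm1} applied \emph{inside} $\mathbf Q$: two proper elements partially reciprocal below $1/2$ have infimum $0$ in $\mathbf Q$, and two partially reciprocal above $1/2$ have supremum $1$. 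I will also use that for orthogonal $a,b$ one has $a+b=a\vee b$, and that $\mathbf Q$ is distributive with unique complements.

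For (i) I would argue exactly as in the proof of Theorem~\ref{th4}(iii): reciprocality of $p$ and $q$ gives $p\wedge q=0$ and $p\vee q=1$ in $\mathbf Q$ by Lemma~\ref{lm1}, so $q$ is a complement of $p$; uniqueness of complements in $\mathbf Q$ forces $q=p'$, contradicting that $p,q,p',q'$ are pairwise different. Case (ii) is the same argument run entirely inside $P$: reciprocality of $p_1,p_2$ makes $p_2$ a complement of $p_1$ in $\mathbf Q$, hence $p_2=p_1'$, again contradicting pairwise distinctness; as this obstruction lives in $P$ itself, no $q$ can be critical.

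The heart of the matter is (iii), and (iv) is merely (iii) with $p_4:=q$. Put $r:=p_1+p_2=p_3+p_4$; since the summands are orthogonal, $r=p_1\vee p_2=p_3\vee p_4$ in $\mathbf Q$, so $p_1\le r=p_3\vee p_4$. Because the four elements are pairwise partially reciprocal below $1/2$, Lemma~\ref{lm1} gives $p_1\wedge p_3=0$ and $p_1\wedge p_4=0$. Distributivity of $\mathbf Q$ then yields
\[
p_1=p_1\wedge(p_3\vee p_4)=(p_1\wedge p_3)\vee(p_1\wedge p_4)=0\vee 0=0,
\]
contradicting that $p_1$ is proper, hence nonzero. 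For (iv) the identical computation with $p_4$ replaced by $q$ gives $p_1=(p_1\wedge p_3)\vee(p_1\wedge q)=0$, the two infima vanishing by Lemma~\ref{lm1} since $q$ is partially reciprocal below $1/2$ to both $p_1$ and $p_3$.

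The main obstacle I anticipate is organizational rather than deep. One must be careful that Lemma~\ref{lm1} is invoked in the \emph{ambient} algebra $\mathbf Q$, not merely in $P$ --- this is legitimate because partial reciprocality is a property of the functions themselves and any lower bound in $\mathbf Q$ is still an $S$-probability --- and one must justify the steps $p_1+p_2=p_3+p_4\Rightarrow p_1\vee p_2=p_3\vee p_4$ and the distributive expansion, both of which hold in the Boolean algebra $\mathbf Q$. A secondary check is that the contradiction in (iii)/(iv) needs only $p_1\wedge p_3=p_1\wedge p_4=0$, which follows directly from reciprocality below $1/2$; in particular the hypothesis automatically forces $p_3\neq p_1$ and $p_4\neq p_1$, since a proper element is never partially reciprocal to itself below $1/2$.
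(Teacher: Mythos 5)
Your proof is correct, and in the central case it takes a genuinely shorter route than the paper's. The frame coincides: assume a Boolean algebra $\mathbf Q$ of $S$-probabilities containing $P\cup\{q\}$ (for (ii) and (iii) just $P$, which is why $q$ is arbitrary --- the paper makes the same reduction), and apply Lemma~\ref{lm1} inside $\mathbf Q$, using that orthogonal sums are suprema. In case (i) the paper assembles all twelve meet/join identities and exhibits $\{0,p,q,p',q',1\}$ as a subalgebra of $\mathbf Q$ isomorphic to $\mathbf{MO_2}$; your unique-complement argument (one pair suffices: $p\wedge q=0$ and $p\vee q=1$ force $q=p'$, contradicting pairwise distinctness) extracts the same contradiction more economically, exactly as in Theorem~\ref{th4}(iii). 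The real divergence is (iii)/(iv): the paper proves $p_1\vee p_3=p_1\vee p_4=p_2\vee p_3=p_2\vee p_4=e$ and $p_i'\wedge p_j'=e'$ and then identifies a subalgebra of $\mathbf Q$ isomorphic to $\mathbf{MO_2}\times\mathbf P_2$, whereas you compute $p_1=p_1\wedge(p_3\vee p_4)=(p_1\wedge p_3)\vee(p_1\wedge p_4)=0$ directly; all your inputs ($p_1\le p_1+p_2=p_3+p_4=p_3\vee p_4$ in $\mathbf Q$, the vanishing meets via Lemma~\ref{lm1}, distributivity) are justified, and your version shows the stronger fact that configuration (iii) cannot live in \emph{any} Boolean algebra of $S$-probabilities, while the paper's longer route buys the structural picture of the offending non-distributive sublattice, parallel to the $\mathbf{MO_2}$ of case (i). One caveat applies to both proofs equally: your closing step ``$p_1$ is proper, hence nonzero'' tacitly reads the hypothesis as requiring the $p_i$ to be varying, although the paper's convention counts $0$ as proper and $0$ is trivially partially reciprocal below $1/2$ to everything; but the paper's own proof needs $p_i\ne0$ just as much (its assertion $e\nleq p_i'$ fails for $p_i=0$), so the varying reading --- which your final remark on pairwise distinctness already adopts --- is the shared, intended interpretation, not a gap specific to your argument.
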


\begin{proof}
Assume $\mathbf Q$ to be a Boolean algebra of $S$-probabilities into which $P\cup\{q\}$ can be embedded.
\begin{enumerate}
\item[(i)] Lemma~\ref{lm1} implies that within $Q$ we have $p\vee q=p\vee q'=p'\vee q=p'\vee q'=p\vee p'=q\vee q'=1$ and $p\wedge q=p\wedge q'=p'\wedge q=p'\wedge q'=p\wedge p'=q\wedge q'=0$ from which we infer that $\mathbf{MO_2}$ (for its definition see above) is a subalgebra of $\mathbf Q$, a contradiction since $\mathbf Q$ is assumed to be a Boolean algebra.
\item[(ii)] We observe that if $P\cup\{q\}$ can be embedded into a Boolean algebra $\mathbf Q$, then already $P$ can be embedded into a Boolean algebra and within this Boolean algebra $p_1,p_2,p_1',p_2'$ take on the role of $p,q,p',q'$ in case (i).
\item[(iii)] Like before we can provide a proof within $P$ and assume $\mathbf Q$ to be a Boolean algebra of $S$-probabilities into which $P$ is embedded. By Lemma~\ref{lm1} $p_i\wedge p_j=0$ for $i\ne j$ and $i,j\in\{1,2,3,4\}$. Since $p_i,p_j$ for $i\ne j$ are partially reciprocal below $1/2$, $p_i',p_j'$ are partially reciprocal above $1/2$ and hence again by Lemma~\ref{lm1} $p_i'\vee p_j'=1$ for $i\ne j$ and $i,j\in\{1,2,3,4\}$. From the assumption $p_1+p_2=p_3+p_4=:e$ we can infer that $p_1\vee p_2=p_3\vee p_4=e$. We will further show that $e_1:=p_1\vee p_3$, $e_2:=p_1\vee p_4$, $e_3:=p_2\vee p_3$ and $e_4:=p_2\vee p_4$ are all equal to $e$. For this end we first we observe that $p_i\le e$ for $i=1,2,3,4$. Then we deduce
$e_1=e_1\vee(p_2\wedge p_4)=(p_1\vee p_3\vee p_2)\wedge(p_1\vee p_3\vee p_4)=(p_3\vee e)\wedge(p_1\vee e)=e$. \\
In the same manner considering for $i=2,3,4$ $e_i\wedge(p_j\wedge p_k)$ with $j\ne k$ and $\{j,k\}$ complementary to the two indices of the elements $p$ constituting $e_i$ we obtain $e_i=e$. It then follows that $p_i'\wedge p_j'=e'$ for $i\ne j$ and $i,j\in\{1,2,3,4\}$. Moreover, $p_1\leq p_2'$, $p_2\leq p_1'$, $p_3\leq p_4'$, $p_4\leq p_3'$, $e=p_1\vee p_2\vee p_3\vee p_4 \nleq p_i'$ for $i=1,2,3,4$ and $e'=p_1'\wedge p_2'\wedge p_3'\wedge p_4'\ngeq p_i$. This way we have come upon a subalgebra of $\mathbf Q$ which is isomorphic to $\mathbf{MO_2}\times\mathbf P_2$ ($\mathbf P_2$ denoting the two-element Boolean algebra), again a contradiction to $\mathbf Q$ being a Boolean algebra.
\item[(iv)] We assume $\mathbf Q$ to comprise $P\cup\{q\}$ and have the same arguments for a proof like in case (iii).
\end{enumerate}
\end{proof}

\section{Special algebras of numerical events}

\begin{lemma}\label{lem1}
Let $n\ge2$ and $P\subseteq[0,1]^S$. Then $P$ is an algebra of $S$-probabilities that is a $2^n$-element Boolean algebra if and only if there exist varying $S$-probabilities $p_1,\ldots,p_n$ satisfying $p_N=1$ and $P=\{p_I\mid I\subseteq N\}$ where $N:=\{1,\ldots,n\}$ and $p_I:=\sum\limits_{i\in I}p_i$ for all $I\subseteq N$. In this case the mapping $I\mapsto p_I$ from $2^N$ to $P$ is an isomorphism from the Boolean algebra $(2^N,\subseteq,{}',\emptyset,N)$ to $(P,\le,{}',0,1)$ where $I':=N\setminus I$ for all $I\subseteq N$. Moreover, $p_1,\ldots,p_n$ are the atoms of $(P,\le)$.
\end{lemma}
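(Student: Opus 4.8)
The plan is to prove the two implications separately; since the reverse implication (from the existence of the $p_i$ to the Boolean structure) carries the real content, I would dispatch it first. Assume varying $S$-probabilities $p_1,\dots,p_n$ with $\sum_{i\in N}p_i=p_N=1$ are given and put $P=\{p_I\mid I\subseteq N\}$. The two facts I would establish before anything else are an \emph{orthogonality--disjointness correspondence} and the \emph{injectivity} of $I\mapsto p_I$. For the former: if $I,J$ are disjoint then $p_I+p_J=p_{I\cup J}\le p_N=1$, so $p_I\perp p_J$; conversely $p_I\perp p_J$ means $p_I+p_J\le1=p_N$, and cancelling the terms common to both sides yields $p_{I\cap J}\le p_{N\setminus(I\cup J)}$, whence $2p_{I\cap J}\le1$ and so $p_{I\cap J}\le1/2$ everywhere, which together with $p_i\le p_{I\cap J}$ for any $i\in I\cap J$ contradicts that $p_i$ is varying unless $I\cap J=\emptyset$. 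The same device gives injectivity: if $p_I=p_J$ with $A:=I\setminus J\neq\emptyset$, then cancelling common terms leaves $\sum_{a\in A}p_a=\sum_{b\in J\setminus I}p_b=:t$ with disjoint index sets, so $2t\le1$ and $p_{a_0}\le t\le1/2$ for any $a_0\in A$, again contradicting varying; an identical computation shows $p_I\le p_J\iff I\subseteq J$.

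With these correspondences in place the remainder is bookkeeping. Closure of $P$ under ${}'$ follows from $p_I'=1-p_I=p_N-p_I=p_{N\setminus I}$ (here $p_N=1$ is used), and closure under orthogonal sums follows because the index sets of an orthogonal triple $p_I,p_J,p_K$ are pairwise disjoint, so $p_I+p_J+p_K=p_{I\cup J\cup K}\in P$; together with $0=p_\emptyset$ and $1=p_N$ this verifies (a)--(c) of Definition~\ref{def1}, so $P$ is an algebra of $S$-probabilities. The order-correspondence, combined with $\emptyset\mapsto0$, $N\mapsto1$ and $I'\mapsto p_I'$, shows that $I\mapsto p_I$ is an order isomorphism of $(2^N,\subseteq)$ onto $(P,\le)$ respecting complementation; hence $P$ is a $2^n$-element Boolean algebra isomorphic to $(2^N,\subseteq,{}',\emptyset,N)$, and the atoms $\{i\}$ of $2^N$ are mapped to $p_{\{i\}}=p_i$, which identifies $p_1,\dots,p_n$ as the atoms of $(P,\le)$.

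For the forward implication, assume $P$ is a $2^n$-element Boolean algebra. By the structure theory of finite Boolean algebras it has exactly $n$ atoms $p_1,\dots,p_n$, every element is the join of the atoms below it, and $1=p_1\vee\cdots\vee p_n$. Since $n\ge2$ no atom can equal $0$ or $1$, so each $p_i$ is varying by the remark in the Introduction. Distinct atoms satisfy $p_i\wedge p_j=0$, which in a Boolean algebra means $p_i\le p_j'$, i.e.\ $p_i\perp p_j$; invoking the observation from the Introduction that $p\perp q$ gives $p\vee q=p+q$, a short induction converts each join $\bigvee_{i\in I}p_i$ into the sum $\sum_{i\in I}p_i$. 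Hence $p_I=\sum_{i\in I}p_i$, $p_N=\sum_{i\in N}p_i=1$ and $P=\{p_I\mid I\subseteq N\}$, as required.

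The main obstacle---indeed the only nonroutine point---is the pair of arguments resting on the \emph{varying} hypothesis: both the implication ``$p_I\perp p_J\Rightarrow I\cap J=\emptyset$'' and the injectivity of $I\mapsto p_I$ come down to deducing an inequality $p_{a_0}\le1/2$ from a relation of the shape $2t\le1$ and then using that a varying element is never $\le1/2$ everywhere---this is exactly the mechanism already exploited in Lemma~\ref{lm1}. Everything else reduces to the observation that orthogonal sums coincide with joins and to the classical structure theorem for finite Boolean algebras.
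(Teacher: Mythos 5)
Your proposal is correct and follows essentially the same route as the paper: the same orthogonality--disjointness correspondence derived from the varying property (the paper gets $2p_i\le p_I+p_J\le1$ directly rather than by your cancellation step), the same verification of (a)--(c), and the same appeal to the structure of finite Boolean algebras for the forward direction. The only cosmetic difference is that the paper obtains injectivity and the order correspondence via the chain $I\subseteq J\iff I\cap J'=\emptyset\iff p_I\perp p_{J'}\iff p_I\le p_J$ instead of your direct cancellation computation, which amounts to the same mechanism.
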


\begin{proof}
First assume $P$ to be an algebra of $S$-probabilities that	is a $2^n$-element Boolean algebra. Let $p_1,\ldots,p_n$ denote the atoms of $(P,\le)$. Then $p_1,\ldots,p_n$ are varying and pairwise orthogonal, $p_N=\bigvee\limits_{i\in N}p_i=1$ and $P=\{\bigvee\limits_{i\in I}p_i\mid I\subseteq N\}=\{p_I\mid I\subseteq N\}$. Conversely, assume $p_1,\ldots,p_n\in P$ to be varying and satisfying $p_N=1$ and $P=\{p_I\mid I\subseteq N\}$. Then conditions (a) and (b) of Definition~\ref{def1} are satisfied. Now let $I,J,K\subseteq N$. If $I\cap J=\emptyset$ then $p_I\perp p_J$. If, conversely, $p_I\perp p_J$ and $I\cap J\ne\emptyset$ then there exists some $i\in I\cap J$ and hence $2p_i\le p_I+p_J\le1$ contradicting the fact that $p_i$ is varying. Therefore $I\cap J=\emptyset$ if and only if $p_I\perp p_J$. If $p_I,p_J,p_K$ are pairwise orthogonal then $I,J,K$ are pairwise disjoint and hence $p_I+p_J+p_K=p_{I\cup J\cup k}\in P$ proving condition (c) of Definition~\ref{def1}. This shows that $P$ is an algebra of $S$-probabilities. We have $p_{I'}=1-p_I=(p_I)'$ and hence the following are equivalent: $I\subseteq J$, $I\cap J'=\emptyset$, $p_I\perp p_{J'}$, $p_I\perp(p_J)'$, $p_I\le p_J$. Finally, $p_\emptyset=0$ and $p_N=1$. Altogether we see that the mapping $I\mapsto p_I$ from $2^N$ to $P$ is an isomorphism from the $2^n$-element Boolean algebra $(2^N,\subseteq,{}',\emptyset,N)$ to $(P,\le,{}',0,1)$ and hence the latter is a $2^n$-element Boolean algebra, too.
\end{proof}

The following theorem generalizes Example~\ref{ex2}.

\begin{theorem}
Let $n\ge2$, let $p_1,\ldots,p_n$ be varying $S$-probabilities satisfying $p_N=1$ where $N:=\{1,\ldots,n\}$ and $p_I:=\sum\limits_{i\in I}p_i$ for all $I\subseteq N$, let $P$ denote the algebra $\{p_I\mid I\subseteq N\}$ of $S$-probabilities that is a $2^n$-element Boolean algebra and let $q$ be a varying $S$-probability satisfying $q<p_1$ such that $p_1-q$ is proper. Then $q\notin P$ and $q$ is not critical.
\end{theorem}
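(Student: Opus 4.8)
The plan is to prove the two assertions separately: first that $q\notin P$, which is immediate from the atom structure of $P$, and then that $q$ is not critical by \emph{exhibiting} a Boolean algebra of $S$-probabilities into which $P\cup\{q\}$ embeds (this is exactly what ``not critical'' demands for a Boolean $P$, by Definition~\ref{def2}). For $q\notin P$ I would invoke Lemma~\ref{lem1}: its atoms are precisely $p_1,\ldots,p_n$, and the only elements of $P$ lying below the atom $p_1$ are $p_\emptyset=0$ and $p_{\{1\}}=p_1$. Since $q<p_1$ is strict and $q$ is varying (so $q\ne0$ and $q\ne p_1$), $q$ can be neither, whence $q\notin P$.

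The construction underlying the second assertion generalizes Example~\ref{ex2}: I would split the atom $p_1$ into two pieces and keep the remaining atoms. Concretely, set $a_0:=q$, $a_1:=p_1-q$ and $a_i:=p_i$ for $2\le i\le n$, giving $n+1$ functions. First I would check these are all varying $S$-probabilities. Indeed $q$ is varying by hypothesis; from $q<p_1\le1$ we get $0\le p_1-q\le1$, and since $q<p_1$ strictly we have $p_1-q\ne0$, while $p_1-q=1$ would force $p_1\ge1$ and hence $q=0$, contradicting that $q$ is varying, so the proper element $p_1-q$ is in fact varying; and $p_2,\ldots,p_n\in P\setminus\{0,1\}$ are varying. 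Next I would record the key identity $a_0+a_1+\cdots+a_n=\bigl(q+(p_1-q)\bigr)+p_2+\cdots+p_n=p_1+\cdots+p_n=p_N=1$.

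With these facts in hand I would apply the converse direction of Lemma~\ref{lem1} to the $n+1\ge3$ varying $S$-probabilities $a_0,\ldots,a_n$ whose sum is $1$: the set $Q:=\{\sum_{j\in J}a_j\mid J\subseteq\{0,1,\ldots,n\}\}$ is then an algebra of $S$-probabilities that is a $2^{n+1}$-element Boolean algebra, with the $a_j$ as atoms and complementation $1-(\cdot)$. It then remains to verify $P\cup\{q\}\subseteq Q$ with $P$ a subalgebra. Clearly $q=a_0\in Q$. For an element $p_I\in P$ with $I\subseteq N$: if $1\notin I$ then $p_I=\sum_{i\in I}a_i\in Q$, and if $1\in I$ then $p_I=a_0+a_1+\sum_{i\in I\setminus\{1\}}a_i\in Q$, so $P\subseteq Q$; since $Q$ carries the same $\le$, ${}'$ and $+$ and contains $0,1$, and $P$ is itself an algebra of $S$-probabilities, $P$ is a subalgebra of $Q$. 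Hence $P\cup\{q\}$ embeds into the Boolean algebra $Q$, so $q$ is not critical.

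The essential idea is the atom-splitting $p_1=q+(p_1-q)$, which realizes $P\cup\{q\}$ inside the larger Boolean algebra built on the refined atom set. I do not expect a serious obstacle: the pairwise orthogonality of the new atoms is supplied automatically by Lemma~\ref{lem1} from $\sum_j a_j=1$ together with nonnegativity, so the only point truly needing care is confirming the hypotheses of that lemma, in particular upgrading the assumption ``$p_1-q$ is proper'' to ``$p_1-q$ is varying'', which is forced by $q<p_1$ and $q$ varying as shown above.
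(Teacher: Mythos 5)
Your proposal is correct and follows essentially the same route as the paper: both split the atom $p_1$ into $q$ and $p_1-q$, keep the remaining atoms, and apply Lemma~\ref{lem1} to the $n+1$ varying summands of $1$ to obtain a $2^{n+1}$-element Boolean algebra $Q\supseteq P\cup\{q\}$, with $q\notin P$ read off from the atom/subset structure. Your explicit upgrade of ``$p_1-q$ proper'' to ``$p_1-q$ varying'' is a detail the paper leaves implicit, and it is handled correctly.
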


\begin{proof}
Put $q_0:=q$, $q_1:=p_1-q$ and $q_i:=p_i$ for all $i=2,\ldots,n$. Then $q_0,\ldots,q_n$ are varying $S$-probabilities satisfying $q_{N_0}=1$ where $N_0:=\{0,\ldots,n\}$ and $q_I:=\sum\limits_{i\in I}q_i$ for all $I\subseteq N_0$. According to Lemma~\ref{lem1}, $Q:=\{q_I\mid I\subseteq N_0\}$ is an algebra of $S$-probabilities that is a $2^{n+1}$-element Boolean algebra including $P\cup\{q\}$. Moreover, $P=\{q_I\mid I\subseteq N_0,\{0,1\}\subseteq I\mbox{ or }\{0,1\}\cap I=\emptyset\}$ and hence $q=q_{\{0\}}\notin P$.
\end{proof}

An algebra of $S$-probabilities that is isomorphic as a lattice to $\mathbf{MO_2}$ (or more general, to $\mathbf{MO_n}$ for $n>2$) is a logic that frequently turns up in connection with physical experiments, e.g.\ with experiments concerning the spin of electrons. ($\mathbf{MO_n}$ denotes the ortholattice of length two with $2n$ pairwise incomparable elements $p_1,p_2,\ldots,p_n,p_1',p_2',\ldots,p_n'$ strictly between $0$ and $1$.)

In the following, we will take a closer look at the algebra of $S$-probabilities isomorphic as a lattice to $\mathbf{MO_2}$. For short we will refer to this algebra of $S$-probabilities just as $\mathbf{MO_2}$.

As commonly used with posets we will write $a\parallel b$ if two elements $a,b$ of a poset  are not comparable.

\begin{lemma}\label{lm2}
Let $P$ be an algebra $\mathbf{MO_2}$ of $S$-probabilities, $g,h\in P$ with $g\parallel h$ and $|S|\le3$. Then there does not exist an $S$-probability $q\notin P$ such that $0<q<g<v<1$ and $0<q<h<v<1$ within a Boolean algebra of $S$-probabilities which includes $P\cup\{q\}$.
\end{lemma}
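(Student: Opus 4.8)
The plan is to argue by contradiction. Suppose a Boolean algebra $\mathbf Q$ of $S$-probabilities contains $P\cup\{q\}$ together with an element $v$ satisfying $0<q<g<v<1$ and $0<q<h<v<1$. The idea is to replace the two incomparable elements $g,h$ by the four ``Boolean components'' they determine inside $\mathbf Q$ and then count the states at which these components can exceed $1/2$.

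Concretely, working inside $\mathbf Q$ I would set $a:=g\wedge h$, $b:=g\wedge h'$, $c:=g'\wedge h$ and $d:=g'\wedge h'$, where $\wedge$ and ${}'$ are taken in $\mathbf Q$. Distributivity gives $g=a\vee b$ and $g'=c\vee d$, and since any two of $a,b,c,d$ have meet $0$ they are pairwise orthogonal; hence $a+b+c+d=a\vee b\vee c\vee d=g\vee g'=1$ as real functions on $S$. The first key step is to show that all four of $a,b,c,d$ differ from $0$. For $a$ this follows from $0<q\le g\wedge h=a$; for $d$ it follows from $v<1$, since $v\ge g\vee h$ forces $0<v'\le(g\vee h)'=g'\wedge h'=d$; and $b\ne0$, $c\ne0$ follow from $g\parallel h$, because $b=0$ would mean $g\le h$ and $c=0$ would mean $h\le g$. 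Moreover each of $a,b,c,d$ lies below one of the proper elements $g,h,g',h'$, so each is different from $1$ and is therefore a proper, hence varying, $S$-probability.

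The final step is the counting argument, which is where $|S|\le3$ enters. Each of $a,b,c,d$ is varying, so each attains a value strictly above $1/2$ at some state. On the other hand, being pairwise orthogonal (equivalently, nonnegative with pointwise sum $1$), no two of them can simultaneously exceed $1/2$ at the same state. Assigning to each of the four functions a state at which it exceeds $1/2$ thus yields four distinct states, so $|S|\ge4$, contradicting $|S|\le3$ and completing the proof.

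I expect the main obstacle to be the bookkeeping that guarantees all four components are nonzero at once: it is exactly here that both ends of the chain are used, $q$ forcing $a\ne0$ and $v$ forcing $d\ne0$, while the incomparability $g\parallel h$ (which is all that survives of the $\mathbf{MO_2}$-structure) forces $b\ne0$ and $c\ne0$. Dropping either $q$ or $v$ would leave only three nonzero components, which do fit on three states, so the hypotheses cannot be weakened in this respect. Once the four nonvanishing facts are in place, the varying property together with pairwise orthogonality makes the state count immediate, and no feature of $\mathbf{MO_2}$ beyond the incomparability of $g$ and $h$ is actually required.
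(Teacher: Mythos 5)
Your proof is correct, but it takes a genuinely different route from the paper's. The paper argues via the reciprocity machinery of Section~4: using Lemma~\ref{lm1} it first shows that $\min(g,h)\le1/2$ pointwise would force $g\wedge h=0$ (contradicting $q>0$) and that $\min(g,h')\le1/2$ would force $g\le h$ (contradicting $g\parallel h$), and then disposes of $|S|=1,2,3$ by an exhaustive enumeration of the sign patterns of $g$ and $h$ relative to $1/2$, showing each pattern lands in one of these two cases. You instead decompose inside the ambient Boolean algebra $\mathbf Q$ into the four components $a=g\wedge h$, $b=g\wedge h'$, $c=g'\wedge h$, $d=g'\wedge h'$, verify they are pairwise orthogonal with pointwise sum $a+b+c+d=1$ (legitimate, since in an algebra of $S$-probabilities the join of orthogonal elements is their pointwise sum), show all four are nonzero --- $q$ forcing $a\ne0$, $v'$ forcing $d\ne0$, incomparability forcing $b,c\ne0$ --- and conclude each is varying, hence exceeds $1/2$ at some state; since no two can exceed $1/2$ at a common state, this needs four distinct states. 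Your argument buys several things: it is uniform in $|S|$ (no case analysis, and the degenerate case $|S|=1$ needs no separate remark), it does not invoke Lemma~\ref{lm1} at all, and it directly yields the sharper conclusion $|S|\ge4$, which is exactly Theorem~\ref{th6} that the paper otherwise derives as a corollary of this lemma. What the paper's route buys is integration with the partial-reciprocity notions it develops for its other results, at the cost of a longer, pattern-by-pattern verification. Your observation that only $g\parallel h$ is used, not the full $\mathbf{MO_2}$ structure, is also accurate and matches the paper, whose proof likewise uses only incomparability.
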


\begin{proof}
Suppose there exists some Boolean algebra of $S$-probabilities satisfying the mentioned property. \\
If $\min(g,h)\le1/2$ then $g\wedge h=0$ contradicting $q>0$. \\
If $\min(g,h')\le1/2$ then $g\wedge h'=0$ whence
\[
g\le h\vee g=(h\vee g)\wedge1=(h\vee g)\wedge(h\vee h')=h\vee(g\wedge h')=h
\]
contradicting $g\parallel h$. \\
Now we distinguish three cases. \\
1) $|S|=1$. \\
In this case there does nor exist an algebra of $S$-probabilities which is isomorphic to $\mathbf{MO_2}$. \\
2) $|S|=2$, say $S=\{s_1,s_2\}$. \\
W.l.o.g.\ we may assume $g(s_1)\le1/2$ and $g(s_2)\ge1/2$. \\
If $h(s_1)\le1/2$ and $h(s_2)\ge1/2$ then $\min(g,h')\le1/2$. \\
If $h(s_1)\ge1/2$ and $h(s_2)\le1/2$ then $\min(g,h)\le1/2$. \\
3) $|S|=2$, say $S=\{s_1,s_2,s_3\}$. \\
W.l.o.g.\ we may assume $g(s_1)\le1/2$, $g(s_2)\le1/2$ and $g(s_3)\ge1/2$. (In the case $g(s_1)\ge1/2$, $g(s_2)\ge1/2$ and $g(s_3)\le1/2$ the situation is dual.) \\
If $h(s_1)\le1/2$, $h(s_2)\le1/2$ and $h(s_3)\ge1/2$ then $\min(g,h')\le1/2$. \\
If $h(s_1)\le1/2$, $h(s_2)\ge1/2$ and $h(s_3)\le1/2$ then $\min(g,h)\le1/2$. \\
If $h(s_1)\le1/2$, $h(s_2)\ge1/2$ and $h(s_3)\ge1/2$ then $\min(g,h')\le1/2$. \\
If $h(s_1)\ge1/2$, $h(s_2)\le1/2$ and $h(s_3)\le1/2$ then $\min(g,h)\le1/2$. \\
If $h(s_1)\ge1/2$, $h(s_2)\le1/2$ and $h(s_3)\ge1/2$ then $\min(g,h')\le1/2$. \\
If $h(s_1)\ge1/2$, $h(s_2)\ge1/2$ and $h(s_3)\le1/2$ then $\min(g,h)\le1/2$.
\end{proof}

\begin{theorem}\label{th6}
Let $P$ be an algebra of $S$-probabilities being isomorphic to $\mathbf{MO_2}=\{0,p_1,$ $p_1',p_2,p_2',1\}$ and $q\notin P$ an $S$-probability with $0<q<p_1,p_2$ and $\mathbf Q$ a Boolean algebra of $S$-probabilities including $P\cup\{q\}$ such that $p_1\parallel p_2$ in $\mathbf Q$ and $p_1\vee p_2<1$. Then $|S|\ge4$.
\end{theorem}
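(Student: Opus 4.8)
The plan is to argue by contradiction: assuming $|S|\le 3$, I will manufacture exactly the configuration that Lemma~\ref{lm2} declares impossible, and thereby force $|S|\ge 4$. The whole argument is really a packaging step — its job is to cast the standing hypotheses into the form demanded by Lemma~\ref{lm2}, since the genuine combinatorial work (the case distinction over the few states $s\in S$) has already been carried out in that lemma. So the only thing to do here is to produce the missing common upper bound $v$ and to verify the strictness of the relevant inequalities.

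First I would set $g:=p_1$ and $h:=p_2$. These are two of the four pairwise incomparable elements strictly between $0$ and $1$ in the copy of $\mathbf{MO_2}$, so in particular $p_1\parallel p_2$ holds in $P$, which supplies the incomparability hypothesis of Lemma~\ref{lm2}. Next I would define $v:=p_1\vee p_2$, the join formed inside the Boolean algebra $\mathbf Q$ (it exists because $\mathbf Q$ is a lattice). Since $p_1\parallel p_2$ is assumed to hold in $\mathbf Q$ as well, neither of $p_1,p_2$ lies below the other, whence $p_1<v$ and $p_2<v$ strictly; moreover $v<1$ is precisely the standing hypothesis $p_1\vee p_2<1$. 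Combining these with the given $0<q<p_1$ and $0<q<p_2$ yields the two chains $0<q<p_1<v<1$ and $0<q<p_2<v<1$, all taking place inside the Boolean algebra $\mathbf Q\supseteq P\cup\{q\}$ with $q\notin P$.

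This is exactly the situation that Lemma~\ref{lm2} rules out when $|S|\le 3$, so the assumption $|S|\le 3$ is untenable and $|S|\ge 4$, as claimed. The one place that demands a little care — and where I would expect a subtle slip — is the strictness $p_1<v$ and $p_2<v$: it rests on reading $p_1\parallel p_2$ \emph{in $\mathbf Q$} (not merely in $P$), so that $v=p_1\vee p_2$ is a proper upper bound of each, together with the hypothesis $p_1\vee p_2<1$ to keep $v$ strictly below $1$. Once these two observations are secured, Lemma~\ref{lm2} delivers the conclusion with no further computation.
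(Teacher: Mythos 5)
Your proof is correct and is essentially the paper's own argument: the paper likewise instantiates Lemma~\ref{lm2} with $g=p_1$, $h=p_2$ and $v=p_1\vee p_2$, obtaining the chains $0<q<p_i<p_1\vee p_2<1$ in $\mathbf Q$ and concluding $|S|\ge4$. Your added verification that $p_1\parallel p_2$ in $\mathbf Q$ forces the strict inequalities $p_i<v$ is exactly the point the paper leaves implicit.
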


\begin{proof}
in $\mathbf Q$ we have $0<q<p_1<p_1\vee p_2<1$ and $0<q<p_2<p_1\vee p_2<1$ which by Lemma \ref{lm2} can only occur in case $|S|\ge4$. 
\end{proof}

\begin{theorem}\label{th7}
Let $P$ be an algebra of $S$-probabilities isomorphic to $\mathbf{MO_2}=\{0,p_1,p_1',p_2,$ $p_2',1\}$ and $q$ a varying $S$-probability not belonging to $P$ and assume $p_1-q$, $p_2-q$ and $1+q-p_1-p_2$ to be varying $S$-probabilities. Then $P\cup\{q\}$ can be embedded into a Boolean algebra, i.e., $q$ is critical.
\end{theorem}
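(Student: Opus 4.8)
The plan is to prove that $q$ is critical by exhibiting an explicit $16$-element Boolean algebra of $S$-probabilities that contains $P\cup\{q\}$, and then to invoke Definition~\ref{def2} (since $P\cong\mathbf{MO_2}$ is not a Boolean algebra, producing such an embedding is exactly what ``critical'' requires in the non-Boolean case). First I would extract the implicit inequalities hidden in the hypothesis that $p_1-q$, $p_2-q$ and $1+q-p_1-p_2$ are $S$-probabilities, i.e.\ functions into $[0,1]$: these force $q\le p_1$, $q\le p_2$ and $p_1+p_2\le 1+q$, and together with $q$ being varying and $q\notin P$ they give $0<q<p_1$ and $0<q<p_2$.

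The crux of the argument is choosing the right common refinement of $p_1,p_1',p_2,p_2',q$ into atoms, and here the hypothesis hands me exactly the four ``cells'' I need. I would set
\[
q_1:=q,\quad q_2:=p_1-q,\quad q_3:=p_2-q,\quad q_4:=1+q-p_1-p_2 .
\]
By assumption each $q_i$ is a varying $S$-probability, and a one-line computation gives $q_1+q_2+q_3+q_4=1$ pointwise. Writing $N:=\{1,2,3,4\}$ and $q_I:=\sum_{i\in I}q_i$, I can then apply Lemma~\ref{lem1} with $n=4$ directly: it yields that $Q:=\{q_I\mid I\subseteq N\}$ is an algebra of $S$-probabilities which is a $2^4$-element Boolean algebra whose atoms are $q_1,q_2,q_3,q_4$ (the injectivity of $I\mapsto q_I$, and hence the distinctness of the cells, being part of what Lemma~\ref{lem1} delivers).

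It then remains to verify the inclusion $P\cup\{q\}\subseteq Q$, which is pure bookkeeping: $0=q_\emptyset$, $1=q_N$, $q=q_{\{1\}}$, $p_1=q+(p_1-q)=q_{\{1,2\}}$, $p_2=q+(p_2-q)=q_{\{1,3\}}$, $p_1'=1-p_1=q_{\{3,4\}}$ and $p_2'=1-p_2=q_{\{2,4\}}$. Hence $q\in Q$ and $P$ is a subalgebra of $Q$ in the required sense (same $S$, same order, same $'$, same partial $+$), so $P\cup\{q\}$ embeds into the Boolean algebra $Q$ and $q$ is critical.

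I do not expect a genuine obstacle: once the four cells are written down, the only things to check are that they are varying (precisely the hypothesis) and that they sum to $1$ (immediate), after which Lemma~\ref{lem1} does the work. The single subtlety worth a remark is consistency of the partial operation under the embedding. The only nontrivial orthogonal pairs in $P$ are $\{p_1,p_1'\}$ and $\{p_2,p_2'\}$, with $p_i+p_i'=1$, and these remain orthogonal with the same sum in $Q$; meanwhile $p_1$ and $p_2$ are non-orthogonal both in $P$ and in $Q$, so no sum defined in $P$ is disturbed. This confirms that $P$ genuinely sits inside $Q$ as a subalgebra even though lattice joins are not preserved (for instance $p_1\vee p_2=1$ in $P\cong\mathbf{MO_2}$, whereas $p_1\vee p_2=q_{\{1,2,3\}}<1$ in $Q$), which is exactly the behaviour permitted by the notion of subalgebra discussed in the introduction.
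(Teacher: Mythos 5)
Your proposal is correct and follows exactly the paper's own argument: the authors likewise observe that $q$, $p_1-q$, $p_2-q$ and $1+q-p_1-p_2$ are varying $S$-probabilities summing to $1$ and invoke Lemma~\ref{lem1} to obtain the $2^4$-element Boolean algebra containing $P\cup\{q\}$. Your explicit bookkeeping (identifying $p_1=q_{\{1,2\}}$, $p_2=q_{\{1,3\}}$, etc., and checking that the partial operation $+$ on $P$ is preserved) merely spells out details the paper leaves implicit.
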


\begin{proof}
Since $q$, $p_1-q$, $p_2-q$ and $1+q-p_1-p_2$ are varying $S$-probabilities summing up to $1$ the proof follows from Lemma~\ref{lem1}.
\end{proof}

Authors' addresses:

Dietmar Dorninger \\
TU Wien \\
Faculty of Mathematics and Geoinformation \\
Institute of Discrete Mathematics and Geometry \\
Wiedner Hauptstra\ss e 8-10 \\
1040 Vienna \\
Austria

Helmut L\"anger \\
TU Wien \\
Faculty of Mathematics and Geoinformation \\
Institute of Discrete Mathematics and Geometry \\
Wiedner Hauptstra\ss e 8-10 \\
1040 Vienna \\
Austria, and \\
Palack\'y University Olomouc \\
Faculty of Science \\
Department of Algebra and Geometry \\
17.\ listopadu 12 \\
771 46 Olomouc \\
Czech Republic \\
helmut.laenger@tuwien.ac.at
\end{document}